\documentclass[11pt]
{amsart}
\usepackage{amssymb,amsthm,amsmath,latexsym}

\usepackage{float}
\usepackage{url}
\usepackage{systeme}
\usepackage{wrapfig}
\usepackage[margin=0.9in]{geometry}
\usepackage{tikz}

\usetikzlibrary{shapes,backgrounds,arrows.meta}
\usepackage{tikz-3dplot}

\usepackage{caption}

\usepackage{quoting}
\def\cal{\mathcal}

\newtheorem{theorem}{Theorem}

\newtheorem*{proposition}{Proposition}

\newtheorem*{remark*}{Remark}

\makeatletter
\renewcommand*\env@matrix[1][*\c@MaxMatrixCols c]{% 
  \hskip -\arraycolsep
  \let\@ifnextchar\new@ifnextchar
  \array{#1}}
\makeatother

\usepackage{hyperref}
\usepackage[justification=centering]{caption}

\begin{document}

\title
{Helly's Theorem--A Very Early Introduction
}
\markright{ \, Helly's Theorem--Early Introduction }

\author{Eric L.~Grinberg}
\date{} % This suppresses the date

\newenvironment{acknowledgement}%       New acknowledgement environment
    {\large\bfseries Acknowledgement%
    \par\medskip\normalfont\normalsize}%
    {}%

\begin{abstract}
We propose an interpretation of, and approach to, Helly's theorem that can be included quite early in the undergraduate curriculum. At the same time, the approach connects with contemporary models of data privacy and with sampling methods used in epidemiology. The presentation is intended to be accessible to teachers and their students. \end{abstract}

\subjclass[2020]{Primary 52A35, Secondary 52A20, 15A06, 97M10}
\keywords{Helly's Theorem, Overdetermined Systems, Venn Diagrams}

\maketitle

%\vspace*{2cm}
\begin{center}
    \begin{minipage}{0.7\textwidth}
        \centering\small\textit{Dedicated to Boris Rubin on the occasion of his 80th birthday}
    \end{minipage}
\end{center}

%\author{ Eric Grinberg }
%\address{UMass Boston }
%\email{eric.grinberg@umb.edu }

\section{Introduction.}

Helly's theorem engenders excitement when introduced to students, yet many math majors meet it late in their studies, if at all. It can engender just as much excitement among majors in engineering, economics, and other fields. A linear algebra class often has representation from these student groups and is a viable forum for an early introduction to Helly. So, in the first days of this semester, we gave students a glimpse of Helly's intersection theorem in the context of linear algebra. Returning to the office, we found a new issue of the \emph{Bulletin of the American Mathematical Society} featuring, what else, but the celebrated Eduard Helly and his intersection theorems \cite{barany22, hauser22}. 
Coincidence?  Perhaps. In any event, paralleling the idea of \emph{Early Transcendentals} in calculus, we thought we’d share what one might call an \emph{Early Helly} \hyperlink{fig:cartoon}[H] approach in linear algebra, as well as in basic set theory.

\section{Overdetermined systems: sampling for consistency.} 

\begin{wrapfigure}[9]{r}{0.1\textwidth}
\vspace{-23pt} 
%\hspace{-10pt} 
\includegraphics[width= 0.1\textwidth]{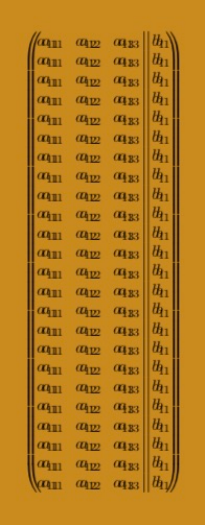}
\end{wrapfigure}

\smallskip
Imagine a large, overdetermined system of equations, with many more equations than unknowns, say one hundred equations in three variables. Is such a system consistent (read: does it have a solution?) Determining consistency directly may require substantial computation. One can consider taking ``samples'' of small subsystems that are only slightly overdetermined. Such sampling ideas also appear in modern contexts such as epidemiological testing and privacy-preserving data analysis. For instance, take a sample of four equations out of the one hundred, and test the smaller system for consistency. One can hope that if the entire system is inconsistent then a sample subsystem will already reveal the inconsistency. But before embarking on such a statistical strategy, one might want some kind of guarantee against strong false positives.

\section{Conventions and notations.}
A linear equation is said to be \emph{consistent} if it has a solution; it is said to be \emph{degenerate}
if all coefficients of the unknowns are zero. Thus $0x+0y+0z=1$ is an inconsistent degenerate equation and  $0x+0y+0z=0$
is a consistent degenerate equation.  Each of the equations in \eqref{tetrahedron-system} below is consistent and nondegenerate.

\section{A tetrahedral example.}

\begin{equation} \label{tetrahedron-system}
\systeme{x+0y+0z=0,0x+y+0z=0,0x+0y+z=0,
x+y+z=1}  \qquad .
\end{equation}

In the linear system \eqref{tetrahedron-system} each of the four equations is consistent. The solution sets are the $yz$-plane, the $xz$-plane, and the $xy$-plane, respectively.  The fourth (last) equation has a solution set that is a slanted plane. Choosing any three of the four equations, we obtain a consistent subsystem. For instance, if we choose the first three equations then the resulting subsystem has the solution $x=0,y=0,z=0$, i.e., the origin.  If we choose the subsystem comprising the first two and the last equation then the solution set is  $x=0,y=0,z=1$. Proceeding in this way, the reader can verify the following.

\begin{proposition}
Any $3$-equation subsystem of the tetrahedral  linear system \eqref{tetrahedron-system} is consistent, but the system as a whole is inconsistent.
\end{proposition}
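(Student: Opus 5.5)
The plan has two parts: check the four $3$-equation subsystems one at a time, then show the full system \eqref{tetrahedron-system} has no solution. There are exactly $\binom{4}{3}=4$ subsystems, each obtained by omitting one equation, so the case split is by which equation is omitted.

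If we omit the fourth equation, the first three force $x=y=z=0$, and this origin solves the subsystem. If instead we omit one of the three coordinate equations, say the one for the coordinate $w \in \{x,y,z\}$, then the two remaining coordinate equations set the other two variables to $0$. The last equation then reduces to $w=1$. So the subsystem has the solution equal to the standard basis vector in the $w$-direction: $(1,0,0)$, $(0,1,0)$ or $(0,0,1)$. The text has already verified the case $(0,0,1)$, and the other two follow by the same substitution, using the symmetry of the system under permuting the variables. This handles all four subsystems, and in each case the solution is a vertex of the tetrahedron, which explains the name.

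For inconsistency of the whole system, suppose $(x,y,z)$ satisfied all four equations. The first three force $x=y=z=0$. Substituting into the fourth gives $0=1$, a contradiction. Equivalently, subtracting the sum of the first three equations from the fourth yields the inconsistent degenerate equation $0x+0y+0z=1$ from the conventions section. This linear-combination form is worth recording, since it is the certificate of inconsistency that generalizes.

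There is no real obstacle here, since everything is direct substitution. The only point needing care is making sure the case enumeration is exhaustive, i.e.\ that all four subsystems are covered. That is guaranteed by organizing the cases by the single omitted equation.
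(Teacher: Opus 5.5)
Your proposal is correct and follows essentially the same route as the paper: direct substitution into each of the four subsystems (yielding the origin and the three standard basis vectors), with inconsistency of the whole system coming from the first three equations forcing the origin, which violates $x+y+z=1$. Organizing the cases by the omitted equation, and recording the certificate $0x+0y+0z=1$, fill in what the paper leaves to the reader (``proceeding in this way'').
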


Reflecting on this result, we see that consistent subsystems do not guarantee consistent overall systems. Still, a variation on this theme yields positive results. Let's  ``up'' the size of the sampled subsystems. 

\section[Four equations suffice]{Four equations suffice.\protect\footnote{This title is in homage to the Four Color Theorem, and its motto \emph{four colors suffice} \cite{Wilson13}.}}

\begin{theorem}[Helly for Planes in $\mathbb R^3$] \, \\
Let $T$ be a nondegenerate  system of $N$ linear equations in $3$ unknowns, with $N \ge 4$. Assume that any subsystem of $T$ comprised of $4$ equations is consistent. Then the full system is consistent.
\end{theorem}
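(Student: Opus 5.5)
We argue by linear algebra rather than by general convexity, using the rank characterization of consistency: a linear system $A\mathbf{x}=\mathbf{b}$ is consistent if and only if $\operatorname{rank}A=\operatorname{rank}[A\,|\,\mathbf{b}]$. Write the equations of $T$ as $\mathbf{a}_i\cdot\mathbf{x}=b_i$, $i=1,\dots,N$, with $\mathbf{a}_i\in\mathbb R^3$ nonzero by nondegeneracy. Let $A$ be the $N\times 3$ coefficient matrix and $[A\,|\,\mathbf{b}]$ the $N\times 4$ augmented matrix. Suppose, for contradiction, that $T$ is inconsistent. Then $\operatorname{rank}[A\,|\,\mathbf{b}]=\operatorname{rank}A+1$, which says that the vector $(0,0,0,1)$ lies in the row space of $[A\,|\,\mathbf{b}]$.

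The key step is to pick few rows that already witness this. Let $r=\operatorname{rank}[A\,|\,\mathbf{b}]\le 4$. Choose $r$ linearly independent rows of the augmented matrix, say those indexed by $i_1,\dots,i_r$. They span the whole row space, so they span $(0,0,0,1)$; hence the subsystem consisting of these $r$ equations has augmented rank $r$ but coefficient rank at most $r-1$, and so it is inconsistent. Since $r\le 4$, we may pad this subsystem with further equations of $T$ (possible because $N\ge4$) to obtain a $4$-equation subsystem. That subsystem contains an inconsistent subsystem, so it is itself inconsistent, which contradicts the hypothesis. Therefore $T$ is consistent.

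The main point needing care is the middle step: a spanning set of rows of the augmented matrix also spans the distinguished vector $(0,0,0,1)$, and this forces a rank jump in the chosen subsystem. Concretely, if $(0,0,0,1)=\sum_k c_k(\mathbf{a}_{i_k},b_{i_k})$, then $\sum_k c_k\mathbf{a}_{i_k}=0$ while $\sum_k c_k b_{i_k}=1$. Taking the same linear combination of the chosen equations gives $0=1$. This is the linear-algebra incarnation of the number $d+1=4$ in Helly's theorem: it is the dimension of the space of augmented rows. Nondegeneracy is not actually needed in this argument. Its role in the statement is presumably to match the geometric reading, in which each equation is a plane.

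An alternative, more geometric route would treat the solution sets as convex subsets of $\mathbb R^3$ and invoke the general Helly theorem, proved via Radon's lemma. That would obscure the early linear-algebra flavor, so we would present the rank argument instead.
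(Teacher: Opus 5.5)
Your proof is correct, and it takes a different route from the paper's.

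\textbf{The paper's argument.} It is geometric and direct. By nondegeneracy each equation cuts out a plane. Either all the planes coincide, or two distinct ones $P_1,P_2$ meet in a line $\ell$; they must meet, by the $4$-equation hypothesis. Then either every plane contains $\ell$, or some $P_3$ cuts $\ell$ in a single point $Q$. In the latter case every further plane must pass through $Q$, since otherwise it would form an inconsistent $4$-equation subsystem with $E_1,E_2,E_3$. So the paper actually exhibits the common solution (a plane, a line, or a point). In this proof the number $4$ arises as ``three equations to pin down the point, plus one to test.''

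\textbf{Your argument.} You prove the contrapositive with the rank criterion for consistency. A basis of the augmented row space has at most $4$ rows. Because that basis spans $(0,0,0,1)$, it yields an explicit combination of at most $4$ equations that reads $0=1$.

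\textbf{What each approach gives.} Your version is shorter and avoids case analysis. As you correctly observe, it makes no use of nondegeneracy. It also carries over verbatim to $k$ unknowns with $k+1$ equations, which is the paper's upper-level exercise. And it explains the $4$ intrinsically, as the dimension of the space of augmented rows.

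The paper's version needs no rank theory. It uses only two facts: two distinct meeting planes meet in a line, and a line not contained in a plane meets it in at most one point. That suits the intended audience of students at the very start of a linear algebra course.
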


\begin{proof}By nondegeneracy, each equation in the system $T$ has a solution set that is a plane. If all $N$ planes are the same, the solution set of $T$ is that plane, and the system is consistent. 

Otherwise, there are two equations, $E_1,E_2$, whose solution sets are distinct planes $P_1,P_2$. These planes must meet since we can append two additional equations to $E_1,E_2$ to form a consistent $4$-equation subsystem, and solutions to that subsystem belong to both planes. Thus $P_1 \cap P_2$ is a line $\ell$, since two distinct intersecting planes in $3$-space meet in a line.

If all the rest of the solution planes meet $P_1 \cap P_2$  in $\ell$ then $\ell$ is part of the solution set of $T$ and hence $T$ is consistent.  If some solution plane $P_3$ meets $P_1 \cap P_2$ in less than the full line, that solution plane meets $\ell$ at just one point $Q$. Each additional solution plane beyond $P_1,P_2,P_3$  must also meet $\ell$ at $Q$, else the resulting $4$-equation subsystem would be inconsistent. Hence the point $Q$ belongs to all the solution planes and so $T$ is consistent.
\end{proof}

\subsection{Follow-up exercises.} For entry-level students: work out the case of systems of equations with two unknowns, so that any three-equation subsystem is consistent;  use the geometry of lines in the plane. For upper level students: work out the case of equations with $k$ unknowns so that any  $(k+1)$ equation subsystem is consistent; use the concept of vector space \emph{dimension}.

\section{Recalling Venn Diagrams}

We recall (in more ways than one) the tool of \emph{Venn diagrams}, often found in introductory proof classes. For example, here is a Venn diagram illustrating three sets so that any two meet, but not all three meet:

\def\firstcircle{(-0.6,-0.6) circle (1.5cm)}
\def\secondcircle{(60:2cm) circle (1.5cm)}
\def\thirdcircle{(-17:2.2cm) circle (1.4cm)}
\smallskip \begin{tikzpicture}
    \begin{scope}[shift={(3cm,-5cm)}, fill opacity=0.5]
        \fill[blue] \firstcircle;
        \fill[green] \secondcircle;
        \fill[red] \thirdcircle;
        \draw \firstcircle node[below] {$A$};
        \draw \secondcircle node [above] {$B$};
        \draw \thirdcircle node [below] {$C$};
    \end{scope}
\end{tikzpicture}

For configurations with more than three sets,  the standard Venn diagrams suffer limitations; perhaps, for this reason, they have been recalled. See, e.g., \cite[Exercise 4, p. 44]{velleman}. Helly's theorem serves to ``explain" this limitation. \\

Below we will say that two sets $A$ and $B$ \emph{meet} to indicate that their intersection, $A \cap B$, is nonempty.

\begin{theorem}[Minimalist Helly] Let  $\mathcal  C$  be  a collection of $N$ disks  in the plane, with $N \ge 3$. Assume that any \emph{three} disks in $\mathcal  C$  meet. Then all the disks of $\mathcal  C$  meet.
\end{theorem}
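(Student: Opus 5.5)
We argue by induction on $N$, using the standard Radon-type step that reduces $N$ disks to $N-1$. Disks are convex and compact, and these are the only two properties we will use. The case $N=3$ is the hypothesis itself. For the inductive step it suffices to prove the case $N=4$ in a form that passes to larger families. Given $N\ge 4$ disks $D_1,\dots,D_N$, any three of which meet, we replace $D_{N-1},D_N$ by the single convex compact set $D_{N-1}\cap D_N$. That set is not a disk, so the induction should really be run over compact convex sets. We therefore state and prove the claim for compact convex sets, and the disk version follows as a special case.

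\emph{Step 1 (four sets).} Let $C_1,\dots,C_4$ be convex sets, any three of which meet. For each $i$ choose a point $p_i$ lying in all the $C_j$ with $j\ne i$. We then apply Radon's lemma to the four points $p_1,\dots,p_4$ in the plane. Either one of them lies in the triangle formed by the other three, or they split into two pairs whose connecting segments cross. In the first case, say $p_4\in\operatorname{conv}\{p_1,p_2,p_3\}$. The points $p_1,p_2,p_3$ all lie in $C_4$, so $p_4\in C_4$ by convexity. Also $p_4$ lies in $C_1,C_2,C_3$ by construction, so $p_4$ is in all four sets. In the second case, say the segments $p_1p_2$ and $p_3p_4$ cross at a point $q$. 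Both $p_1$ and $p_2$ lie in $C_3\cap C_4$, so the segment $p_1p_2\subset C_3\cap C_4$. Similarly $p_3p_4\subset C_1\cap C_2$. Hence $q$ lies in all four sets. Radon's lemma itself is elementary: take the convex hull of the four points. If it is a triangle we are in the first case; if it is a quadrilateral, its diagonals cross. Degenerate collinear configurations are handled by the one-dimensional version, where some point lies between two others.

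\emph{Step 2 (induction).} Suppose the claim holds for $N-1$ convex sets with $N\ge 4$, and let $C_1,\dots,C_N$ be such that any three meet. Set $C'=C_{N-1}\cap C_N$, which is convex. We check that any three of $C_1,\dots,C_{N-2},C'$ meet. A triple not involving $C'$ meets by hypothesis. A triple $C_i,C_j,C'$ amounts to the four sets $C_i,C_j,C_{N-1},C_N$, which meet by Step 1. By induction all of $C_1,\dots,C_{N-2},C'$ meet, and therefore so do all of $C_1,\dots,C_N$. Note that compactness is not even needed here, since the family is finite.

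The main obstacle is Step 1. The Radon case split, and in particular making the degenerate configurations rigorous at an introductory level, is where care is needed. I would present the generic picture (triangle versus crossing diagonals) with a figure, and remark that the coincident or collinear cases reduce to the line version. The line version parallels the first follow-up exercise after Theorem 1.
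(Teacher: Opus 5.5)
Your proof is correct, but it is the classical Radon-based proof of planar Helly, which is exactly the route the paper deliberately avoids. Its closing remark says the disk setting is chosen to sidestep Radon's and Carath\'eodory's theorems.

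The paper never leaves the class of disks. It applies the induction hypothesis to a \emph{subfamily} of $N$ of the $N+1$ disks, so their intersection $G$ is a nonempty region bounded by circular arcs. If the remaining disk $T$ missed $G$, a closest point pair between $T$ and $G$ would give a separating line. At an interior point of an arc this is the tangent line. At a corner it is the perpendicular that cuts $T$ off from the wedge of tangent rays containing the intersection of the two disks meeting there. Either way this contradicts the hypothesis that any two (respectively three) disks meet.

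Because you instead merge $C_{N-1}$ and $C_N$ into $C_{N-1}\cap C_N$, you are forced to generalize to convex sets, as you rightly note. In return you get a stronger theorem (arbitrary convex sets, no compactness, no closest points or tangent lines). You also get a direct rather than indirect proof, since Step 1 exhibits the common point explicitly; the paper's remark asks for exactly such a direct argument. The paper's approach, in turn, uses only circle-and-tangent geometry that a first-course audience already knows. The cost is care in degenerate configurations, such as $G$ shrinking to a single point or several arcs passing through one corner, which it treats quickly.

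Your degenerate cases are easier than you suggest. If two of the $p_i$ coincide or three are collinear, some $p_i$ lies on a segment joining two others, hence in the convex hull of the remaining three. So you are already in your first case. The only geometric input left is that the diagonals of a convex quadrilateral cross.

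A minor inconsistency: you announce that you will use compactness, but, as you later observe yourself, you never do.
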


\begin{wrapfigure}[3]{r}{0.09\textwidth}
  \vspace{-22pt}
%\centering
 %\hspace{-20pt}    
 %
 % https://tex.stackexchange.com/questions/498769/tikz-define-3d-coordinate-axes-for-the-space-pyramid-tetrahedron
 %
\tdplotsetmaincoords{70}{100} 
\scalebox{0.25}{
\begin{tikzpicture}[line join=round,tdplot_main_coords,declare function={a=5;}] 
\begin{scope}[canvas is xy plane at z=0,transform shape]
 \path foreach \X [count=\Y] in {A,B,C}
 {(\Y*120:{a/(2*cos(30))}) coordinate(\X)};
\end{scope}
\path (0,0,{a*cos(30)}) coordinate (D);
\draw[line width=3.5pt]  foreach \X/\Y [remember=\X as \Z (initially D)] in {A/B,B/C,C/D,D/A}
 {(\X) -- (\Z) -- (\Y)};
\end{tikzpicture} }
\end{wrapfigure}
Thus, for example, Venn diagrams (planar, disk-based) cannot convey the intersection properties of the four sides of a tetrahedron. (Any three tetrahedral faces have a common meeting point; no point is common to all four.)
\\

Before proceeding with the proof, we consider the proper and improper ways in which disks can meet. If two disks in $\mathcal C$ meet \emph{properly}, their boundary circles intersect in a pair of points and the  intersection set of the disks is a region called a \emph{lens}. It is bounded by two arcs. See (a) below.

%\scalebox{1.3}
{
\[
\scalebox{1.6}{
\begin{tikzpicture}
  \draw[fill=blue!70,rotate=90]
  ([shift={(-40:1cm)}]0,0) arc (-40:40:1cm);
   \draw[fill=blue!70,rotate=-90]
  ([shift={(-40:1cm)}]-1.535,0) arc (-40:40:1cm);
 \node[right] (a) at (0.0,0.3){(a)};
\end{tikzpicture}
\quad \begin{tikzpicture}
  \draw[fill=blue!70,rotate=90]
  ([shift={(-40:1cm)}]0,0) arc (-40:40:1cm);
   \draw[fill=blue!70,rotate=-90]
  ([shift={(-40:1cm)}]-1.535,0) arc (-40:40:1cm);
\draw [<->,thick,rotate=20]  (0.5,0.45)  arc (0:28.5:01.cm);
 \node[right] (b) at (0.0,0.3){(b)};
\end{tikzpicture}
\quad
\begin{tikzpicture}
%\draw [lightgray] (0,0) grid (5,5);
  \draw[fill=blue!70,rotate=90]
  ([shift={(-40:1cm)}]0,0) arc (-40:40:1cm);
   \draw[fill=blue!70,rotate=-90]
  ([shift={(-40:1cm)}]-1.535,0) arc (-40:40:1cm);
%
%\hspace{-25pt} 
\draw (0.5,0.68)  [<->,thick,rotate=60] arc (0:70:0.75cm);
 \node[right] (a) at (0.0,0.3){(c)};
\end{tikzpicture}}
\]
}
A third disk may intersect the first two disks in less than this lens. Then the three disks intersect in a region which is a smaller lens bounded by two arcs (c), or a region bounded by three arcs (b). Of course, some disks can intersect ``improperly", and meet at just one point (\emph{osculate}), or one disk may be contained in another, as illustrated below:
 \\

\def\firstcircle{(-0.53,-0.6) circle (1.2cm)}
\def\secondcircle{(-20:2cm) circle (1.2cm)}
\begin{tikzpicture}
    \begin{scope}[shift={(2cm,-1cm)}, fill opacity=0.5]
        \fill[blue] \firstcircle;
        \fill[green] \secondcircle;
        \draw \firstcircle node[below] {$A$};
        \draw \secondcircle node [above] {$B$};
    \end{scope}
\end{tikzpicture}
\qquad \qquad
\begin{tikzpicture}
\def\firstcircle{(1.30,-0.6) circle (1.3cm)}
\def\secondcircle{(-20:2cm) circle (0.6cm)}
    \begin{scope}[shift={(2cm,-1cm)}, fill opacity=0.5]
        \fill[blue] \firstcircle;
        \fill[green] \secondcircle;
        \draw \firstcircle node[left] {$A$};
        \draw \secondcircle node [] {$B$};
    \end{scope}
\end{tikzpicture} . \\

In all situations, the intersection of a finite collection of disks is a region bounded by circular arcs, which may degenerate to a single point, or revert to a full circle.

\begin{proof} 

We induct on $N$, the number of disks in $\cal C$. The starter case $N=3$ is given by the hypothesis. We assume the result for all configurations  with $N$ disks, and consider a collection $\cal C$ with $N+1$ disks.

We may assume that $N$ of the disks intersect in a region bounded by circular arcs; call it $G$. Take the $N+1$st disk and call it $T$. If $T$ meets $G$, we are done. If $T$ does not meet $G$, consider the closest point-pair  between the disk $T$ and the arc-polygonal region $G$. (Such a pair exists because $G$ is bounded by arcs.) Draw a directed line segment, from $T$ to $G$, connecting the closest point pair. There are two cases: the closest point on $G$ is in the interior of a boundary arc, or the closest point on $G$ is at a corner $P$, where two arcs meet.

\begin{figure}[H]
\centering
\begin{tabular}{|c|c|}
 \hline &  \\
\scalebox{1.1}{
\begin{tikzpicture}
[thick,
    set/.style = {circle,
        minimum size = 3cm,
        } ]
 
% Intersection
\begin{scope}
   \clip (0,0) circle(1.7cm);
   \clip (1.8,0) circle(1.5cm);
   \clip (0.9,1.5) circle(1.5cm);
   \fill[blue!55](0.9,1.50) circle(1.5cm);
\end{scope}
 
 \node[] (G) at (1.0,0.5){G};
 
 % Panel (i)
 \begin{scope}
    \clip (-3.2,-3.2) rectangle (3.2,3.2);
    \draw [thick,black!60!black] (-5.138,2.435) circle(5.0cm);
    \node[] (T) at (-2.0,1.8){T};
 \end{scope}
 
 \draw [green,thick, -{Stealth[scale=1.2]}] (-0.37,0.89) -- (0.44,0.627);
 \draw[thick, green, {Stealth[scale=1.2]}-{Stealth[scale=1.2]}] (0.402 - 0.526, 0.640 - 1.62) -- (0.402 + 0.526, 0.640 + 1.62);
 \draw [dashed] (1.78,0.01) circle(1.5cm);
\end{tikzpicture}  }
&
 \scalebox{1.1}{
\begin{tikzpicture}
[thick,
    set/.style = {circle,
        minimum size = 3cm,
        } ]
 
% Intersection
\begin{scope}
   \clip (0,0) circle(1.7cm);
   \clip (1.8,0) circle(1.5cm);
   \clip (0.9,1.5) circle(1.5cm);
   \fill[blue!55](0.9,1.50) circle(1.5cm);
\end{scope}

  \draw [dashed] (1.78,0.01) circle(1.5cm);
  \draw [dashed] (0.92,1.48) circle(1.5cm);
  \node[] (G) at (1.0,0.5){G};
 
 % Panel (ii)
 \begin{scope}
    \clip (-3.2,-3.2) rectangle (3.2,3.2);
    \draw [thick,black!60!black] (-3.637,-3.582) circle(5.0cm);
    \node[] at (-1.8, -1.8){T};
 \end{scope}

 % Precision Clipping to Tangent Rays
 \begin{scope}
    % The upper ray vector is (0.102, 1.314). Multiplying to reach outside the figure.
    % The lower ray vector is (1.205, -0.545).
    \clip (0.305, 0.125) -- (0.305 + 0.102*3, 0.125 + 1.314*3) -- (4, 4) -- (4, -2) -- (1.51, -0.42) -- cycle;
    
    % Radial lines: Semithick purple!70
    \foreach \ang in {-24, -14, ..., 96}
       \draw[purple!70, semithick] (0.305, 0.125) -- +(\ang:2.2cm);
       
    % Concentric arcs: Uniform spacing 0.4
    \foreach \r in {0.4, 0.8, 1.2, 1.6, 2.0}
       \draw[purple!70, semithick] (0.305, 0.125) circle (\r cm);
 \end{scope}
 
 \draw[thick,green, -{Stealth[scale=1.2]}] (-0.031,-0.175) -- (0.305,0.125);
 \draw[thick, green, {Stealth[scale=1.2]}-{Stealth[scale=1.2]}] (0.305 - 1.146, 0.125 + 1.283) -- (0.305 + 1.146, 0.125 - 1.283);

 % Purple Tangent Rays
 \draw[thick, purple, -{Stealth[scale=1.2]Stealth[scale=1.2]}] (0.305, 0.125) -- (0.407, 1.439);
 \draw[thick, purple, -{Stealth[scale=1.2]Stealth[scale=1.2]}] (0.305, 0.125) -- (1.51, -0.42);

\end{tikzpicture}   }
\\
\hline
(i) Arc interior is closest  & (ii) Corner is closest     \\
\hline
\end{tabular}
\end{figure}

If $T$ is closest to a point in the interior of an arc of $G$, as in \emph{(i)} above, the tangent line to $G$ at the closest point is perpendicular to the connecting line segment, and this tangent line separates $G$ from $T$, so $T$ and the disk bounded by the closest arc  do not meet. The hypothesis states that any three of the disks meet, so certainly any two of the disks meet, and we have reached a contradiction.

In the other case, $T$ is closest to a point $P$ on $G$ where two boundary arcs of $G$ meet, as in \emph{(ii)} above. The two circles corresponding to the two arcs that meet at $P$ have tangent \emph{rays} (double-headed in (ii)) that stay on the same side as $G$ relative to the perpendicular to the directed segment from $T$ to $G$. These two rays form a region (an angle), illustrated by a radial grid. The perpendicular to the segment going from $T$ to $P$ separates the disk $T$ from the radial grid, and thus also separates the two circles from $T$. Hence $T$ and the two respective disks do not meet, and we have reached a contradiction.
\end{proof}

\begin{remark*} Here we have focused on disks, in contrast with the general Helly theorem that allows for convex sets. The narrowed context has the advantage of avoiding the need for additional results, such as Radon's or Carath\'{e}odory's theorems, and thereby fits more easily and less distractedly into a first linear algebra course. The proof is indirect. It would be interesting to consider a more direct proof.
\end{remark*}

\begin{figure}[H]
\hypertarget{fig:cartoon}{} % Create a target for the link
\caption*{Figure H}
\centering
\includegraphics[width=0.6 \textwidth]{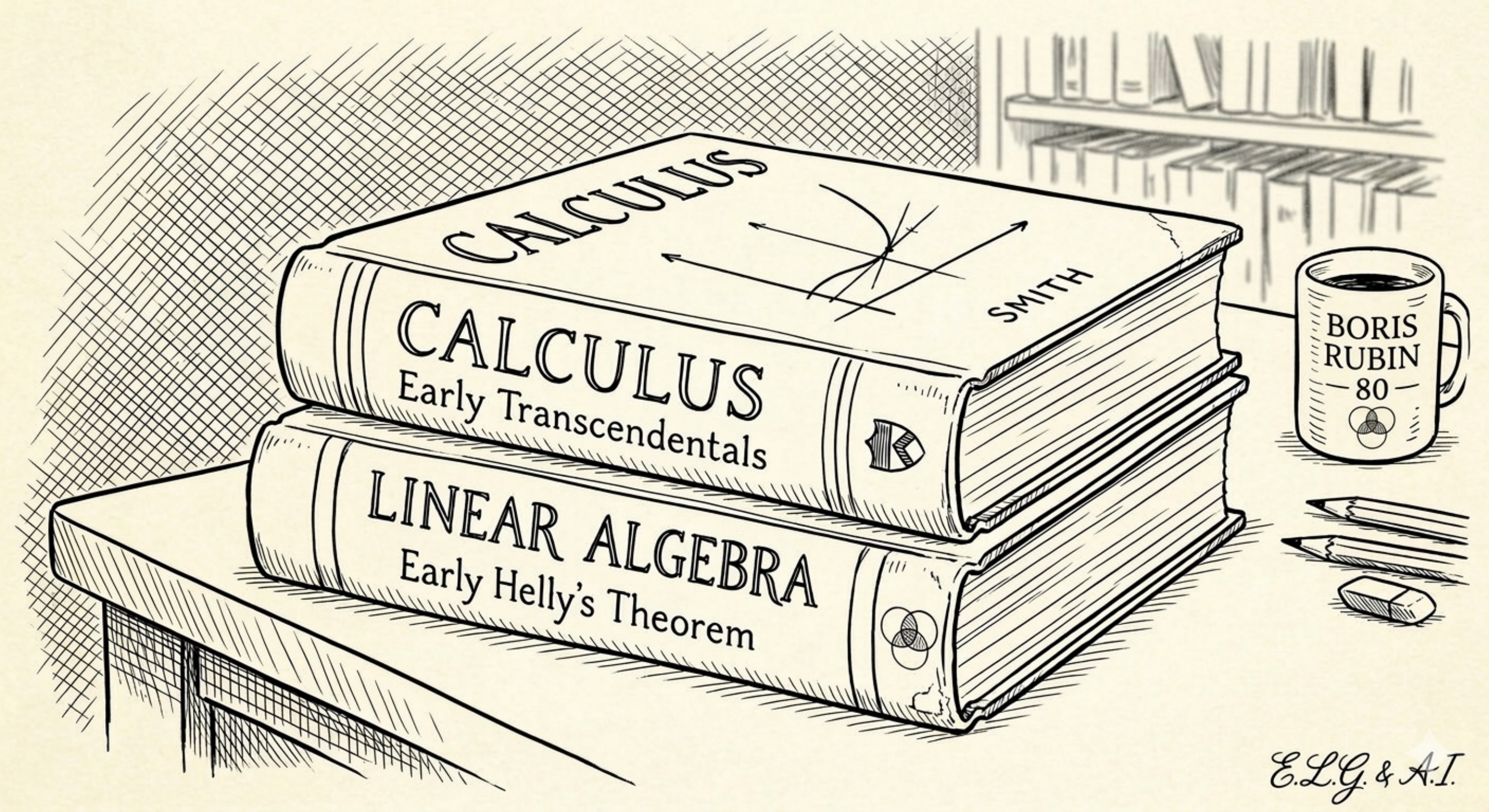}
\end{figure}

\noindent
Department of Mathematics, University of Massachusetts Boston,   USA \\
\qquad \href{mailto:eric.grinberg@umb.edu}{eric.grinberg@umb.edu}

\end{document}